\newcommand{\pFq}[5]{\ensuremath{{}_{#1}{}\phi_{#2} \left( \genfrac{}{}{0pt}{}{#3}
{#4} ; {#5} \right)}}
\renewcommand{\(}{\left\(}
\renewcommand{\)}{\right\)}
\renewcommand{\[}{\left\[}
\renewcommand{\]}{\right\]}
\numberwithin{equation}{section}
 \theoremstyle{plain}
\newtheorem{theorem}{Theorem}[section]
\newtheorem{lemma}[theorem]{Lemma}
\newcommand{\sm}{\left(\begin{smallmatrix}}
\newcommand{\esm}{\end{smallmatrix}\right)}
\newtheorem{corollary}[theorem]{Corollary}
\def\proof{\@ifnextchar[{\@oproof}{\@nproof}}
\def\@oproof[#1][#2]{\trivlist\item[\hskip\labelsep\textit{#2 \textbf{Proof of}\
#1.}~]\ignorespaces}
\def\@nproof{\trivlist\item[\hskip\labelsep\textit{Proof.}~]\ignorespaces}
\definecolor{blue}{rgb}{0,0,1}
\definecolor{red}{rgb}{1,0,0}
\definecolor{green}{rgb}{0,.6,.2}
\definecolor{purple}{rgb}{1,0,1}
\long\def\red#1\endred{{\color{red}#1}}
\long\def\blue#1\endblue{{\color{blue}#1}}
\long\def\purple#1\endpurple{{\color{purple}#1}}
\long\def\green#1\endgreen{{\color{green}#1}}
\begin{document}
\title[Rank, two-color partitions and Mock theta function]{Rank, two-color partitions and Mock theta function}
\author{George E. Andrews}
\address{Department of Mathematics, The Pennsylvania State University, University Park, PA 16802, U.S.A.}
\email{gea1@psu.edu}

\author{Rahul Kumar}
\address{Department of Mathematics, Indian Institute of Technology, Roorkee-247667, Uttarakhand, India}
\email{rahul.kumar@ma.iitr.ac.in} 


\subjclass[2020]{Primary 11P81 Secondary 05A17}
  \keywords{Mock theta function, partition ranks, color partitions, spt-function}
\maketitle
\pagenumbering{arabic}
\pagestyle{headings}
\begin{abstract}
In this paper, we establish that the number of partitions of a natural number with positive odd rank is equal to the number of two-color partitions (red and blue), where the smallest part is even (say $2n$) and all red parts are even and lie within the interval $(2n,4n]$. This led us to derive a new representation for the third order mock theta function $f_3(q)$ and an analogue of the fundamental identity for the smallest part partition function Spt$(n)$, both of which are of significant interest in their own right. We also consider the odd smallest part version of the above two-color partition, whose generating function involves another third order mock theta function $\phi_3(q)$.

\end{abstract}

\section{Introduction}

In their paper \cite{ady}, Dixit, Yee and the first author found surprising partition identities arising from new results for two of the third order mock theta functions $\omega_3(q)$ and $\nu_3(q)$, originally studied by Ramanujan and Watson. One of the key identities for $\omega_3(q)$, established by Dixit, Yee, and the first author, is \cite[Theorem 3.1]{ady}
\begin{align}\label{omega3}
\omega_3(q)=\sum_{n=1}^\infty\frac{q^{n-1}}{(1-q^n)(q^{n+1};q)_n(q^{2n+2};q^2)_\infty},
\end{align}
where the mock theta function $\omega_3(q)$ is defined as \cite[p.~15]{rlnb} or \cite[p.~62]{watson}
\begin{align*}
\omega_3(q):=\sum_{n=0}^\infty\frac{q^{2n(n+1)}}{(q;q^2)_{n+1}},
\end{align*}
and, 
\begin{align*}
(A;q)_N&=(1-A)(1-Aq)\cdots(1-Aq^{N-1}),\qquad 1\leq N<\infty.
\end{align*}
We will also be using the following notation
\begin{align*}
(A;q)_\infty&=\lim_{N\to\infty}(A;q)_N.
\end{align*}

A very similar result holds for the mock theta function $\nu_3(-q)$, as shown in \cite[Theorem 4.1]{ady}. These kinds of identities are always desirable as they offer clear partition-theoretic interpretations. For example, it follows easily from \eqref{omega3} that $q\omega(q)$ is the generating function for partitions in which each odd part is less than twice the smallest part. Moreover, very recently, it has been established by Andrews and  Bachraoui \cite{ab} that $q\omega_3(q)$ and $\nu_3(-q)$ are generating functions for two-color partitions in which the even or the odd parts may occur only in one color. 

The aforementioned work  of Andrews, Dixit and Yee \cite{ady} was inspired by the fundamental identity for the smallest part partition function, spt$(n)$, introduced in \cite{andrewsspt}:
\begin{align}\label{andrews spt function}
\sum_{n=1}^\infty\mathrm{spt}(n)q^n&=\sum_{n=1}^\infty\frac{q^{n}}{(1-q^{n})^2(q^{n+1};q)_\infty}\nonumber\\
&=\frac{1}{(q;q)_\infty}\sum_{n=1}^\infty\frac{nq^{n}}{1-q^{n}}+\frac{1}{(q;q)_\infty}\sum_{n=1}^\infty\frac{(-1)^n(1+q^n)}{(1-q^{n})^2}q^{\frac{3n}{2}(n+1)},
\end{align}
where spt$(n)$ is the number of smallest parts in all the partitions of $n$.

Missing from these considerations is the principal third order mock theta function \cite[pp. 127--131]{rlnb}
\begin{align}
f_3(q):=\sum_{n=0}^\infty\frac{q^{n^2}}{(-q;q)_n^2}.\nonumber
\end{align}
The mock theta function $f_3(q)$ has been the subject of numerous important research papers due to its remarkable properties (see, for example, \cite{ad, andrewsAJM, bo, za, zw} to name a few).

In this paper, we shall prove the following result for $f_3(q)$.
\begin{theorem}\label{new representation}
The following identity holds:
\begin{align}
f_3(q)=\frac{1}{(q;q)_\infty}-4\sum_{n=1}^\infty\frac{q^{2n}}{(q^{2n};q^2)_{n+1}(q^{2n+1};q)_\infty}.\nonumber
\end{align}
\end{theorem}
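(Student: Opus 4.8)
I want to relate both sides of the claimed identity to the generating-function machinery already present in the paper, in particular the structure of \eqref{omega3}. The right-hand side of Theorem~\ref{new representation} contains the sum
\[
\sum_{n=1}^\infty\frac{q^{2n}}{(q^{2n};q^2)_{n+1}(q^{2n+1};q)_\infty},
\]
which is strongly reminiscent of \eqref{omega3} read with the base $q$ replaced by $q^2$ in part of the product: note $(q^{2n};q^2)_{n+1}=(1-q^{2n})(q^{2n+2};q^2)_n$, so the $n$-th summand is $\frac{q^{2n}}{(1-q^{2n})(q^{2n+2};q^2)_n(q^{2n+1};q)_\infty}$. The first step is therefore to interpret this sum partition-theoretically: it should count two-color partitions of the type described in the abstract (smallest part $2n$, red parts even and in $(2n,4n]$), weighted appropriately, so that after clearing the factor $\tfrac14$ we are looking at the generating function for partitions of $n$ with positive odd rank. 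This is exactly the combinatorial theorem the paper announces, so I expect the authors to have proved that first and to invoke it here.

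**Reduction to a $q$-series identity.** Granting the combinatorial identification, Theorem~\ref{new representation} becomes equivalent to a purely analytic statement: the generating function $R_{\text{odd}}^{+}(q)$ for partitions with positive odd rank satisfies
\[
\sum_{n\ge 1} R_{\text{odd}}^{+}(n)\,q^{n}=\frac{1}{4}\left(\frac{1}{(q;q)_\infty}-f_3(q)\right).
\]
This I would prove from the known rank generating function. Recall that $\sum_{m,n} N(m,n)z^m q^n=\sum_{n\ge0}\frac{q^{n^2}}{(zq;q)_n(z^{-1}q;q)_n}$, and that $f_3(q)=\sum_{m,n}(-1)^m N(m,n)q^n$ is the alternating-rank specialization $z=-1$. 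Splitting $N(m,n)$ by the parity of $m$ and using the symmetry $N(m,n)=N(-m,n)$, one gets $\sum_n\big(\#\{\text{parts of }n\}-f_3\text{-coefficient}\big)q^n=2\sum_{m\ \text{odd}}\sum_n N(m,n)q^n = 4\sum_{m\ge1,\ m\ \text{odd}}\sum_n N(m,n)q^n$ after folding $m\leftrightarrow -m$ once more carefully — the factor $4$ in the theorem is the signal that this parity-and-sign bookkeeping is the crux. So the identity is really the assertion that the generating function for partitions with positive odd rank equals $\tfrac14\big((q;q)_\infty^{-1}-f_3(q)\big)$, combined with the combinatorial count of the sum.

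**The main obstacle.** The genuinely hard step is establishing the combinatorial theorem itself — that the sum $\sum_{n\ge1}\frac{q^{2n}}{(q^{2n};q^2)_{n+1}(q^{2n+1};q)_\infty}$ is the generating function for the stated two-color partitions, \emph{and} that this class of partitions is equinumerous (with the right weighting) with partitions of positive odd rank. For the first part I would expand each factor: $\frac{1}{(q^{2n};q^2)_{n+1}}$ generates even parts in $[2n,4n]$ with a chosen color (red), $\frac{1}{(q^{2n+1};q)_\infty}$ generates all parts $>2n$ (blue, say), and the $q^{2n}$ forces a smallest part equal to $2n$; summing over $n$ assembles all such partitions. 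For the equinumerosity with positive-odd-rank partitions I expect a bijection or a generating-function manipulation passing through a known Bailey-pair or Durfee-square dissection of $\sum_n q^{n^2}/\big((zq;q)_n(z^{-1}q;q)_n\big)$ at a root-of-unity-like specialization; making that dissection match the two-color structure term-by-term is where the real work lies. Once both halves are in hand, Theorem~\ref{new representation} follows by equating generating functions and rearranging the constant $\tfrac14$.
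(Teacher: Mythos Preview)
Your reduction is logically valid but circular relative to the paper's architecture, and it leaves the crucial step unproved. You correctly observe that the sum on the right of Theorem~\ref{new representation} is $\sum_{n\ge1}G(n)q^n$, and that the elementary rank-parity bookkeeping gives $\sum_{n\ge1}N_o^+(n)q^n=\tfrac14\bigl((q;q)_\infty^{-1}-f_3(q)\bigr)$. Hence Theorem~\ref{new representation} is equivalent to Theorem~\ref{G(n)=N(n)}. But you then declare the ``main obstacle'' to be proving $G(n)=N_o^+(n)$ directly, and offer only a sketch (``bijection or a Bailey-pair or Durfee-square dissection'') without carrying any of it out. That is precisely the content you have not supplied: the paper in fact leaves a bijective proof of $G(n)=N_o^+(n)$ as an open problem in its concluding remarks, so the route you are banking on is not available off the shelf.

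The paper proceeds in the opposite direction. It proves Theorem~\ref{new representation} by a direct $q$-series computation: after the product rearrangement you noted, the sum is massaged into $\dfrac{q^2}{(q;q)_\infty(1+q)(1+q^2)}\sum_{n\ge0}\dfrac{(q^2;q^2)_n q^{2n}}{(-q^3;q^2)_n(-q^4;q^2)_n}$, to which the four-parameter transformation \eqref{4parameter} (with $q\to q^2$, $a=q$, $b=q^2$, $B=q^2$, $A\to0$) is applied. One resulting piece is evaluated via Entry~2.3.9 and the relation \eqref{2.3.1} between $\phi_3(-q)$ and $f_3(q)$; the other piece, a Lambert-type series $\sum_{m\ge0}(-1)^m q^{2m+1}/(1+q^{2m+1})$, is closed by Lemma~\ref{closed form lemma}. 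Theorem~\ref{G(n)=N(n)} is then a corollary of Theorem~\ref{new representation}, not a hypothesis for it. So your proposal reverses the logical flow and, as written, does not constitute a proof.
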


To see the partition theoretic interpretation of this, we let $N_o^+(n)$ denote the number of partitions of $n$ with positive odd rank (the rank of a partition is the largest part minus the number of parts), and we let $G(n)$ denote the number of partitions into red and blue parts, where the smallest part is even (say $2m$) and all red parts are even and lie within the interval $(2m,4m]$. For example, we have $G(8)=7$ as we have:
\begin{align*}
8_b,\ 6_b+2_b,\ 4_b+4_b,\ 4_b+2_b+2_b,\ 2_b+2_b+2_b+2_b,\ 3_b+3_b+2_b,\ 4_r+2_b+2_b.
\end{align*}

Similarly, observe that there are exactly $7$ partitions with a positive odd rank:
\begin{align*}
8,\ 7+1,\ 6+1+1,\ 5+3,\ 5+1+1+1,\ 4+3+1,\ 4+2+2.
\end{align*}

Hence, we have $N_o^+(8)=7=G(8)$.

Our next result establishes that this is not merely a coincidence, but that this equality holds for every natural number $n$.

\begin{theorem}\label{G(n)=N(n)}
For $n\geq1$,
\begin{align*}
N_o^+(n)=G(n).
\end{align*}
\end{theorem}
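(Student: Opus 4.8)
The plan is to prove both sides of $N_o^+(n)=G(n)$ equal to the same $q$-series via their generating functions, using Theorem~\ref{new representation} as the bridge. First I would recall the classical rank generating function: if $N(m,n)$ counts partitions of $n$ with rank $m$, then $\sum_{n,m}N(m,n)z^mq^n = \sum_{n\ge 0} q^{n^2}/((zq;q)_n(z^{-1}q;q)_n)$. Setting $z=-1$ isolates the parity of the rank, since $\sum_n (N_e(n)-N_o(n))q^n = f_3(q)$, where $N_e(n)$ and $N_o(n)$ count partitions of $n$ with even, resp.\ odd, rank. Combining this with the total count $p(n)=N_e(n)+N_o(n)$, whose generating function is $1/(q;q)_\infty$, gives $\sum_n N_o(n) q^n = \tfrac12\bigl(1/(q;q)_\infty - f_3(q)\bigr)$. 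Then by Theorem~\ref{new representation},
\begin{align*}
\sum_{n\ge 1} N_o(n)q^n = 2\sum_{n=1}^\infty\frac{q^{2n}}{(q^{2n};q^2)_{n+1}(q^{2n+1};q)_\infty}.
\end{align*}
The factor of $2$ here should be accounted for by the symmetry $N_o^+(n)=N_o^-(n)$ (conjugation sends a partition of rank $r$ to one of rank $-r$, preserving parity and fixing nothing of odd rank since a self-conjugate partition has rank $0$), so that $N_o(n)=2N_o^+(n)$ and hence
\begin{align*}
\sum_{n\ge 1} N_o^+(n)q^n = \sum_{n=1}^\infty\frac{q^{2n}}{(q^{2n};q^2)_{n+1}(q^{2n+1};q)_\infty}.
\end{align*}

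Next I would show that the right-hand side is precisely the generating function for $G(n)$. Fix the smallest part $2m$ (so $n\ge 1$ indexes $m$). The smallest part itself contributes $q^{2m}$; there must be at least one such part. Blue parts equal to $2m$ and any further parts (red or blue) of size $>2m$ beyond this contribute: blue parts of every size $\ge 2m$ freely, giving $1/(q^{2m};q)_\infty$... but we must be careful — the constraint is that red parts are \emph{even} and lie in $(2m,4m]$. So the factor splits as
\begin{align*}
\underbrace{\frac{q^{2m}}{1-q^{2m}}}_{\text{blue parts }=2m,\ \ge 1\text{ of them}} \cdot \underbrace{\frac{1}{(q^{2m+1};q)_\infty}}_{\text{blue parts }>2m} \cdot \underbrace{\frac{1}{(q^{2m+2};q^2)_{m-1}}}_{\text{red parts, even, in }(2m,4m]}\,,
\end{align*}
where the red factor ranges over even values $2m+2,2m+4,\dots,4m$, i.e.\ $m-1$ values. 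I would then verify the algebraic identity
\begin{align*}
\frac{q^{2m}}{(1-q^{2m})(q^{2m+1};q)_\infty(q^{2m+2};q^2)_{m-1}} = \frac{q^{2m}}{(q^{2m};q^2)_{m+1}(q^{2m+1};q)_\infty},
\end{align*}
which reduces to $(1-q^{2m})(q^{2m+2};q^2)_{m-1}=(q^{2m};q^2)_{m+1}$ — indeed $(q^{2m};q^2)_{m+1}$ is the product of $1-q^{2m},1-q^{2m+2},\dots,1-q^{4m}$, which is exactly $(1-q^{2m})$ times $(q^{2m+2};q^2)_{m}$; I should recount the red parts as $m$ values $2m+2,\dots,4m$ (there are $m$ even integers strictly between $2m$ and $4m$ inclusive of $4m$), so the factor is $(q^{2m+2};q^2)_m$ and the identity is immediate. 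Summing over $m\ge 1$ gives $\sum_n G(n)q^n = \sum_{m\ge1} q^{2m}/((q^{2m};q^2)_{m+1}(q^{2m+1};q)_\infty)$, matching the expression for $\sum_n N_o^+(n)q^n$ and completing the proof.

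The main obstacle I anticipate is not any single deep step but rather pinning down the combinatorial bookkeeping precisely: the exact count of admissible red part-sizes in $(2m,4m]$ and the correct handling of the smallest-part multiplicity, so that the product $1/((q^{2m};q^2)_{m+1}(q^{2m+1};q)_\infty)$ emerges cleanly. A secondary point requiring care is the justification that $N_o(n)=2N_o^+(n)$: one must check that conjugation has no fixed points among partitions of odd rank (true, since fixed points are self-conjugate and have rank $0$), and also that there are no partitions of rank exactly $0$ contributing spuriously — but rank $0$ is even, so it is harmless here. Everything else is formal manipulation of $q$-series together with the already-established Theorem~\ref{new representation}.
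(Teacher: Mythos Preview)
Your proposal is correct and follows essentially the same route as the paper's own proof: both compute the generating function of $N_o^+(n)$ from Theorem~\ref{new representation} via $f_3(q)=1+\sum(N_e(n)-N_o(n))q^n$, $p(n)=N_e(n)+N_o(n)$, and $N_o(n)=2N_o^+(n)$, and both identify $\sum_{m\ge1} q^{2m}/\bigl((q^{2m};q^2)_{m+1}(q^{2m+1};q)_\infty\bigr)$ as the generating function for $G(n)$ by direct combinatorial reading. Your write-up is slightly more explicit than the paper's (you spell out the conjugation argument for $N_o(n)=2N_o^+(n)$ and the bookkeeping for the red and blue factors, correctly self-correcting the count of even integers in $(2m,4m]$ to $m$), but the architecture is identical.
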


Our proof of these results will rely on the following analog of \eqref{andrews spt function}.
\begin{theorem}\label{spt theorem}
We have the following $q$-series identity:
\begin{align}\label{spt theorem eqn}
&\sum_{n=1}^\infty\frac{q^{2n}}{(1-q^{2n})^2(-q^{n+1};q)_n(q^{n+1};q)_\infty}\nonumber\\
&=\frac{1}{(q;q)_\infty}\sum_{n=1}^\infty\frac{nq^{2n}}{1-q^{2n}}+\frac{1}{(q;q)_\infty}\sum_{n=1}^\infty\frac{(-1)^n(1+q^n)}{(1-q^{2n})^2}q^{\frac{3n}{2}(n+1)}.
\end{align}
\end{theorem}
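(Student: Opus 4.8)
The plan is to imitate Andrews' derivation of \eqref{andrews spt function}. The left side of \eqref{spt theorem eqn} is the generating function for the total number of smallest parts, summed over the partitions counted by $G(n)$: if the smallest part equals $2m$ and occurs $j\ge 1$ times it contributes $\sum_{j\ge1}jq^{2mj}=q^{2m}/(1-q^{2m})^2$, while $\frac{1}{(-q^{m+1};q)_m(q^{m+1};q)_\infty}$ records the remaining red and blue parts (here one uses $(-q^{m+1};q)_m(q^{m+1};q)_\infty=(q^{2m+2};q^2)_m(q^{2m+1};q)_\infty$). Since $\frac{\partial}{\partial z}\frac{zq^{2n}}{1-zq^{2n}}\big|_{z=1}=\frac{q^{2n}}{(1-q^{2n})^2}$, the left side of \eqref{spt theorem eqn} therefore equals $\frac{\partial}{\partial z}\mathcal G(z;q)\big|_{z=1}$, where
\begin{equation*}
\mathcal G(z;q):=\sum_{n=1}^{\infty}\frac{zq^{2n}}{(1-zq^{2n})(-q^{n+1};q)_n(q^{n+1};q)_\infty}
\end{equation*}
is the generating function for these two-colored partitions with the multiplicity of the smallest part marked by $z$, so $\mathcal G(1;q)=\sum_{n\ge1}G(n)q^n$. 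Stripping the infinite product by means of $(q^{n+1};q)_\infty=(q;q)_\infty/(q;q)_n$, $(-q^{n+1};q)_n=(-q;q)_{2n}/(-q;q)_n$, and $(q;q)_n(-q;q)_n=(q^2;q^2)_n$ recasts this as
\begin{equation*}
\mathcal G(z;q)=\frac{1}{(q;q)_\infty}\sum_{n=1}^{\infty}\frac{zq^{2n}(q^2;q^2)_n}{(1-zq^{2n})(-q;q)_{2n}},
\end{equation*}
which already displays the prefactor $1/(q;q)_\infty$ common to both terms on the right of \eqref{spt theorem eqn}.

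The heart of the argument — a $z$-refined analog of Theorem \ref{new representation} — is to evaluate the series $\sum_{n\ge1}\frac{zq^{2n}(q^2;q^2)_n}{(1-zq^{2n})(-q;q)_{2n}}$ in closed form, say as $A(z;q)+B(z;q)$, with $A$ a Lambert-type piece that is rational in $q$ and $B$ an Appell--Lerch-type piece carrying the generalized pentagonal exponents $n(3n\pm1)/2$. I would derive this from a suitable Bailey pair (relative to base $q$ or $q^2$) inserted into the limiting form of Bailey's lemma, or equivalently from a specialization of Watson's $q$-analog of Whipple's transformation; this is the analog of the role played by the Appell--Lerch expansion of the rank generating function $\sum_{m,n}N(m,n)z^mq^n$ in Andrews' proof of \eqref{andrews spt function}, and the built-in consistency check is that setting $z=1$ recovers $(q;q)_\infty\sum_{n\ge1}G(n)q^n$. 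Differentiating in $z$ and putting $z=1$ then completes the proof: $\frac{\partial}{\partial z}A(z;q)\big|_{z=1}$ produces $\sum_{n\ge1}nq^{2n}/(1-q^{2n})$, while $\frac{\partial}{\partial z}B(z;q)\big|_{z=1}$ produces $\sum_{n\ge1}\frac{(-1)^n(1+q^n)q^{3n(n+1)/2}}{(1-q^{2n})^2}$. Here the squared denominator and the factor $1+q^n$ come out of $\frac{\partial}{\partial z}\frac{z}{1-zq^{2n}}\big|_{z=1}$ together with the collapses $q^{n(3n-1)/2}+q^{n(3n+1)/2}=q^{n(3n-1)/2}(1+q^n)$ and $q^{n(3n-1)/2}\cdot q^{2n}=q^{3n(n+1)/2}$, precisely as the factor $1+q^n$ and a squared denominator arise in the second-moment computation behind \eqref{andrews spt function}. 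Multiplying through by $1/(q;q)_\infty$ yields \eqref{spt theorem eqn}.

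The main obstacle is precisely this middle step: locating the correct Bailey pair or $q$-hypergeometric transformation, proving the closed-form evaluation of the deformed series, and checking that after the $z$-derivative the generalized pentagonal exponents $n(3n\pm1)/2$ of $B(z;q)$ recombine into exactly the exponents $\frac{3n}{2}(n+1)$ of \eqref{spt theorem eqn}. The reductions on either side — clearing the products at the start, and the bookkeeping for the single $z$-differentiation at the end — are straightforward once that evaluation is in hand.
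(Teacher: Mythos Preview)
Your proposal leaves the central step unproved: you need a closed-form evaluation of the asymmetrically deformed series $\sum_{n\ge1}\frac{zq^{2n}(q^2;q^2)_n}{(1-zq^{2n})(-q;q)_{2n}}$, but you only say you ``would derive this from a suitable Bailey pair \dots\ or a specialization of Watson's $q$-analog of Whipple's transformation'' without identifying either. This is not bookkeeping but the whole content of the theorem, and your own closing paragraph concedes it is ``the main obstacle.'' There is no evident Bailey pair or ${}_{8}\phi_7$ specialization that produces an Appell--Lerch evaluation of this particular one-variable-$z$ series, and your heuristic that the factor $(1+q^n)$ and the squared denominator will emerge after a \emph{single} $z$-derivative from the collapse of terms with exponents $n(3n\pm1)/2$ presupposes a shape for $B(z;q)$ that you have not established.

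The paper proceeds by a different deformation that \emph{does} match known machinery. Rather than your asymmetric insertion $\tfrac{zq^{2n}}{1-zq^{2n}}$, it introduces the \emph{symmetric} numerator $(z;q^2)_n(z^{-1};q^2)_n$, specializes the ${}_{10}\phi_9$ transformation \eqref{andrewsstacks} (a quadratic transformation from \cite{andrewsstacks}, not Watson's ${}_{8}\phi_7$) at $a=1$, $r_1=z$, $r_2=1/z$, $b\to\infty$, and obtains a two-variable identity whose right side already carries the factors $(1-z)(1-z^{-1})$ and the denominators $(1-zq^{2n})(1-z^{-1}q^{2n})$. It then applies the \emph{second}-derivative device $-\tfrac12\,\dfrac{d^2}{dz^2}\big((1-z)(1-z^{-1})f(z)\big)\big|_{z=1}=f(1)$ from \cite{andrewsspt}, exactly as in Andrews' original spt proof. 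So your claim to be ``imitating Andrews' derivation of \eqref{andrews spt function}'' is off in one essential respect: that derivation, and the paper's, use the symmetric rank-type deformation and two derivatives at $z=1$, not a smallest-part-multiplicity variable and one derivative.
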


We conclude our introduction by presenting some complementary results, which follow from the results discussed above.  To state them, let $J(n)$ denote Fine's numbers, which are defined as \cite[p.~56]{fine}
\begin{align}\label{fine numbers}
\sum_{n=1}^\infty J(n)q^n=\sum_{n=1}^\infty \frac{(-1)^nq^{n(3n+1)/2}}{1+q^n}.
\end{align} 

\begin{corollary}\label{Ne(n)}
Let $N_e(n)$ denotes the number of partitions of $n$ with even rank. Then,
\begin{align}\label{even rank generating function}
\sum_{n=1}^\infty N_e(n)q^n=\frac{1}{(q;q)_\infty}-2\sum_{n=1}^\infty\frac{q^{2n}}{(1-q^{2n})(-q^{n+1};q)_n(q^{n+1};q)_\infty}.
\end{align}
Also, if $J(n)$ is as defined in \eqref{fine numbers}, then
\begin{align}\label{fine numbers result}
\sum_{n=1}^\infty J(n)q^n=-\sum_{n=1}^\infty\frac{(q;q)_nq^{2n}}{(1-q^{2n})(-q^{n+1};q)_n}.
\end{align} 
\end{corollary}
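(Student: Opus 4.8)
The plan is to derive Corollary~\ref{Ne(n)} from Theorems~\ref{new representation} and~\ref{spt theorem} together with classical rank generating function identities. First, recall the two-variable rank generating function $R(z;q)=\sum_{n\geq 0}\sum_m N(m,n)z^m q^n = \sum_{n\geq 0} q^{n^2}/((zq;q)_n(z^{-1}q;q)_n)$, and the specialization at $z=-1$, which is Ramanujan's identity $R(-1;q)=\sum_{n\geq 0} q^{n^2}/(-q;q)_n^2 = f_3(q)$. Since $R(-1;q)=\sum_n (N_e(n)-N_o(n))q^n$ where $N_o(n)$ counts partitions of even-weight $n$ with odd rank, and since the total number of partitions is $N_e(n)+N_o(n)$ (generating function $1/(q;q)_\infty$), adding these gives $\sum_n 2N_e(n)q^n = 1/(q;q)_\infty + f_3(q)$. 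Substituting the new representation from Theorem~\ref{new representation} then yields
\begin{align*}
\sum_{n=1}^\infty 2N_e(n)q^n = \frac{2}{(q;q)_\infty}-4\sum_{n=1}^\infty\frac{q^{2n}}{(q^{2n};q^2)_{n+1}(q^{2n+1};q)_\infty},
\end{align*}
and after dividing by $2$ and rewriting $(q^{2n};q^2)_{n+1}(q^{2n+1};q)_\infty = (1-q^{2n})(q^{2n+2};q^2)_n(q^{2n+1};q)_\infty$, a short manipulation identifying $(q^{2n+2};q^2)_n(q^{2n+1};q)_\infty$ with $(-q^{n+1};q)_n(q^{n+1};q)_\infty$ (splitting the tail $(q^{n+1};q)_\infty$ into its even- and odd-indexed factors relative to exponent parity) gives exactly~\eqref{even rank generating function}. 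I expect the bookkeeping in this last product rearrangement to be the only delicate point, since one must carefully track which factors of $(q^{n+1};q)_\infty$ are absorbed.

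For the second assertion~\eqref{fine numbers result}, I would start from Theorem~\ref{spt theorem}. The right-hand side of~\eqref{spt theorem eqn} has two pieces: the first, $\tfrac{1}{(q;q)_\infty}\sum_n nq^{2n}/(1-q^{2n})$, and the second involving the theta-type sum with $q^{3n(n+1)/2}$. The strategy is to recognize that a weighted count related to $N_e(n)$ or to $\sum n\cdot(\text{something})$ connects these. More directly, I would compare~\eqref{spt theorem eqn} with the derivative (in $z$) at $z=1$ of the rank generating function, since $\sum_n \operatorname{spt}$-type quantities arise that way; the analogue here produces $\sum_m |m| N(m,n)$-type sums whose ``smallest part'' reformulation is the left side of~\eqref{spt theorem eqn}. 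Combining the second (theta) term on the right of~\eqref{spt theorem eqn} with the definition~\eqref{fine numbers} of $J(n)$ — noting that $(-1)^n(1+q^n)q^{3n(n+1)/2}/(1-q^{2n})^2$ is, up to the factor $1/(q;q)_\infty$ and simple algebra, the coefficient expansion of $\sum_n J(n)q^n$ times a correction — should isolate $\sum J(n)q^n$ on one side. The cleanest route is probably to use~\eqref{even rank generating function} just proved: express $\sum N_e(n)q^n$ two ways, subtract, and use the known identity $\sum_n(N_e(n)-\tfrac12 p(n))q^n$ or Fine's evaluation $\sum_n J(n)q^n = \sum_n (-1)^n q^{n(3n+1)/2}/(1+q^n)$ against the theta term.

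Concretely, I would establish the intermediate identity
\begin{align*}
\sum_{n=1}^\infty\frac{(q;q)_n q^{2n}}{(1-q^{2n})(-q^{n+1};q)_n} = (q;q)_\infty\sum_{n=1}^\infty\frac{q^{2n}}{(1-q^{2n})(-q^{n+1};q)_n(q^{n+1};q)_\infty},
\end{align*}
which just says $(q;q)_n/(q^{n+1};q)_\infty = (q;q)_\infty$ wait — rather $(q;q)_n \cdot \tfrac{1}{(q^{n+1};q)_\infty}$ is not $(q;q)_\infty$; instead one writes $\tfrac{1}{(q^{n+1};q)_\infty} = \tfrac{(q;q)_n}{(q;q)_\infty}$, so $(q;q)_n q^{2n}/\big((1-q^{2n})(-q^{n+1};q)_n\big) = (q;q)_\infty \cdot q^{2n}(q;q)_n^{-1}\cdot(q;q)_n^2/\dots$ — the point is the factor $(q;q)_n$ versus $(q;q)_\infty$ is exactly the reciprocal of the tail, so~\eqref{fine numbers result} is~\eqref{even rank generating function} multiplied through by $(q;q)_\infty$ and combined with the first (non-theta) term of Theorem~\ref{spt theorem}, leaving precisely the Fine-number theta sum. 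Thus the deduction is: multiply~\eqref{even rank generating function} by $(q;q)_\infty$, subtract $1$, and match the resulting series against~\eqref{fine numbers} using the Rogers--Fine or Fine's transformation for $\sum (-1)^n q^{n(3n+1)/2}/(1+q^n)$. The main obstacle throughout is purely the product/partial-fraction bookkeeping connecting $(q^{2n};q^2)_{n+1}(q^{2n+1};q)_\infty$ with $(1-q^{2n})(-q^{n+1};q)_n(q^{n+1};q)_\infty$ and the analogous step for~\eqref{fine numbers result}; once those product identities are pinned down, both parts of the corollary follow by direct substitution.
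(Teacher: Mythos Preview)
Your argument for~\eqref{even rank generating function} is correct and essentially coincides with the paper's: the paper also uses $f_3(q)=1+\sum_n(N_e(n)-N_o(n))q^n$ together with $p(n)=N_e(n)+N_o(n)$ and Theorem~\ref{new representation}. The product identity you flag as ``delicate'' is routine: $(-q^{n+1};q)_n(q^{n+1};q)_n=(q^{2n+2};q^2)_n$, so $(1-q^{2n})(-q^{n+1};q)_n(q^{n+1};q)_\infty=(q^{2n};q^2)_{n+1}(q^{2n+1};q)_\infty$ with no further bookkeeping.

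For~\eqref{fine numbers result}, however, your plan has a genuine gap. Theorem~\ref{spt theorem} is a red herring here: its left-hand side carries a factor $(1-q^{2n})^{-2}$, not $(1-q^{2n})^{-1}$, so it does not reduce to the series in~\eqref{fine numbers result}, and nothing in your derivative or ``smallest part'' heuristics bridges that discrepancy. Your fallback---multiply~\eqref{even rank generating function} by $(q;q)_\infty$, subtract~$1$, and ``match against'' the definition~\eqref{fine numbers} of $J(n)$---would work, but only if you already know that $(q;q)_\infty\sum_n N_e(n)q^n-1=2\sum_n J(n)q^n$, which is equivalent to Fine's identity
\[
f_3(q)=\frac{1}{(q;q)_\infty}\Big(1+4\sum_{n=1}^\infty J(n)q^n\Big).
\]
You never name this identity; the vague appeal to ``Rogers--Fine or Fine's transformation'' does not pin it down, and it is not the definition~\eqref{fine numbers}. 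The paper's proof is exactly this missing step done directly: equate the Fine identity above with Theorem~\ref{new representation}, cancel the $1/(q;q)_\infty$ terms, divide by~$4$, and use $(q;q)_\infty/(q^{n+1};q)_\infty=(q;q)_n$ together with the product rewrite from the first paragraph. Theorem~\ref{spt theorem} plays no role.
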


We consider the odd smallest part version of $G(n)$ in Section \ref{odd smallest part} below. Its generating function involves another third order mock theta function $\phi_3(q)$.

This paper is organized as follows. In Section \ref{pre}, we collect relevant results from the literature which will be needed for our proofs. Section \ref{new representation for f} is dedicated to the proof of Theorem \ref{new representation}. In Section \ref{partition ranks and two-color}, we establish Theorem \ref{G(n)=N(n)} and derive Corollary \ref{Ne(n)}. Section \ref{analog of spt function} focuses on proving Theorem \ref{spt theorem}. The odd smallest part variation of $G(n)$ is considered in Section \ref{odd smallest part}. Finally, in Section \ref{concluding remarks}, we conclude the paper with some closing remarks.

\section{Preliminaries}\label{pre}

The following definition of the basic hypergeometric series ${}_{r+1}\phi_r$ will be used through the paper:
\begin{align*}
\pFq{r+1}{r}{a_1,a_2,\cdots,a_{r+1}}{b_1,b_2,\cdots,b_{r}}{q,z}:=\sum_{n=0}^\infty\frac{(a_1;q)_n(a_2;q)_n\cdots(a_{r+1};q)_n}{(q;q)_n(b_1;q)_n(b_2;q)_n\cdots(b_{r};q)_n}z^n.
\end{align*}

We will require the following ${}_{10}\phi_9$-transformation \cite[Equation (2.9)]{andrewsstacks}

\begin{align}\label{andrewsstacks}
&\lim_{N\to\infty}\pFq{10}{9}{a,q\sqrt{a},-q\sqrt{a},b,r_1,-r_1,r_2,-r_2,q^{-N},-q^{-N};q}{\sqrt{a},-\sqrt{a},\frac{aq}{b},\frac{aq}{r_1},-\frac{aq}{r_1},\frac{aq}{r_2},-\frac{aq}{r_2},aq^{N+1},-aq^{N+1}}{-\frac{a^3q^{3+2N}}{br_1^2r_2^2}}\nonumber\\
&=\frac{(a^2q^2;q^2)_\infty(a^2q^2/(r_1^2r_2^2);q^2)_\infty}{(a^2q^2/r_1^2;q^2)_\infty(a^2q^2/r_2^2;q^2)_\infty}\sum_{n=0}^\infty\frac{(r_1^2;q^2)_n(r_2^2;q^2)_n(-aq/b;q)_{2n}}{(q^2;q^2)_n(a^2q^2/b^2;q^2)_n(-aq,q)_{2n}}\left(\frac{a^2q^2}{r_1^2r_2^2}\right)^n.
\end{align}

Additionally, we need the following $q$-series identity from \cite[p.~141, Theorem 1]{andrews rlnb}
\begin{align}\label{4parameter}
\sum_{n=0}^\infty\frac{(B;q)_n(-Abq;q)_nq^n}{(-aq;q)_n(-bq;q)_n}&=-\frac{(B;q)_\infty(-Abq;q)_\infty}{a(-bq;q)_\infty(-aq;q)_\infty}\sum_{m=0}^\infty\frac{(A^{-1};q)_m(Abq/a)^m}{(-B/a;q)_{m+1}}\nonumber\\
&\qquad+(1+b)\sum_{m=0}^\infty\frac{(-a^{-1};q)_{m+1}(-ABq/a;q)_m(-b)^m}{(-B/a;q)_{m+1}(Abq/a;q)_{m+1}}.
\end{align}

Finally, we require the following special case of the above transformation, but with the analytic continuation in the parameter $b$.
\begin{lemma}\label{analytic continuation}
For $|b|<\frac{1}{|q^2|}$, we have
\begin{align*}
\sum_{n=0}^\infty\frac{(q^2;q^2)_nq^{2n}}{(-q;q^2)_n(-bq^{2};q^2)_n}&=\frac{(1+q)}{1+q^3}-\frac{q(q^2;q^2)_\infty}{(-q;q^2)_\infty(-bq^2;q^2)_\infty}\sum_{m=0}^\infty\frac{(-1)^mb^mq^{m^2+2m}}{(-q^3;q^2)_{m+1}}\\
&\quad+(1+q)(1-q^2)\sum_{m=1}^\infty\frac{(-b)^mq^{2m+1}}{(1+q^{2m+1})(1+q^{2m+3})}.
\end{align*}
\end{lemma}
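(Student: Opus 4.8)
The plan is to obtain Lemma \ref{analytic continuation} from the four-parameter identity \eqref{4parameter} by a careful specialization followed by analytic continuation in $b$. First I would choose parameters in \eqref{4parameter} so that the left-hand sum $\sum_{n\geq0}\frac{(B;q)_n(-Abq;q)_nq^n}{(-aq;q)_n(-bq;q)_n}$ collapses to $\sum_{n\geq0}\frac{(q^2;q^2)_nq^{2n}}{(-q;q^2)_n(-bq^2;q^2)_n}$. The natural move is to replace $q$ by $q^2$ throughout \eqref{4parameter}, and then pick $B$ so that $(B;q^2)_n=(q^2;q^2)_n$ forces $B=q^2$; pick $A$ so that the factor $(-Abq^2;q^2)_n$ becomes trivial — i.e. $A=0$ — which also kills the ${}_{}$-type term on the right with $(A^{-1};q^2)_m$ unless handled as a limit; and pick $a$ so that $(-aq^2;q^2)_n=(-q;q^2)_n$, i.e. $-aq^2=-q$, so $a=q^{-1}$. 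With $b$ kept as a free parameter (after $q\mapsto q^2$ the denominator factor is $(-bq^2;q^2)_n$, exactly what we want), and the extra $q^{2n}$ coming from the $z=q^n$ factor in \eqref{4parameter} after the substitution $q\mapsto q^2$ (since $q^n\mapsto q^{2n}$).

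Next I would substitute these choices into the right-hand side of \eqref{4parameter} and simplify. With $B=q^2$, $a=q^{-1}$, the prefactor $-\frac{(B;q)_\infty(-Abq;q)_\infty}{a(-bq;q)_\infty(-aq;q)_\infty}$ becomes (after $q\mapsto q^2$) $-q\cdot\frac{(q^2;q^2)_\infty}{(-bq^2;q^2)_\infty(-q;q^2)_\infty}$, matching the middle term of the lemma; the $A\to0$ limit of $(A^{-1};q^2)_m (Abq^2/a)^m$ needs to be read as $\lim_{A\to0}(A^{-1};q^2)_m A^m = (-1)^m q^{2\binom{m}{2}} b^m (q^2/a)^m \cdot(\text{power of }q)$, which with $a=q^{-1}$ and $q\mapsto q^2$ should produce the $(-1)^m b^m q^{m^2+2m}$ and the shifted $q$-Pochhammer $(-q^3;q^2)_{m+1}$ in the denominator (since $-B/a = -q^2\cdot q = -q^3$). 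The term $(1+b)\sum_m$ in \eqref{4parameter} with $a=q^{-1}$ gives $(1+b)$ times a sum involving $(-q;q^2)_{m+1}$ in the numerator — but $(-q;q^2)_1 = 1+q$, and one expects a telescoping/partial-fraction collapse of $\frac{(-q;q^2)_{m+1}(\cdots)}{(-B/a;q^2)_{m+1}(Abq^2/a;q^2)_{m+1}}$ producing the clean geometric-type tail $(1+q)(1-q^2)\sum_{m\geq1}\frac{(-b)^mq^{2m+1}}{(1+q^{2m+1})(1+q^{2m+3})}$, together with the isolated first term $\frac{1+q}{1+q^3}$ coming from the $m=0$ contribution. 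I would verify the $m=0$ term of $(1+b)\sum$ separately: it should give exactly $\frac{(1+q)}{1+q^3}$ (using $(-a^{-1};q^2)_1 = 1-q^2$ when $a=q^{-1}$... careful with signs — $-a^{-1}=-q$, so $(-a^{-1};q^2)_1=1+q$, times other $m=0$ factors), and this accounts for the stray rational term in the lemma.

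The last ingredient is analytic continuation in $b$. Identity \eqref{4parameter} is a formal/convergent identity under whatever constraints the original source \cite{andrews rlnb} imposes (typically $|q|<1$ and smallness conditions on the parameters); but after specialization the left-hand series $\sum_{n\geq0}\frac{(q^2;q^2)_nq^{2n}}{(-q;q^2)_n(-bq^2;q^2)_n}$ converges for all $b$ outside the poles $b=-q^{-2k}$, and in particular for $|b|<|q|^{-2}$ the estimate $(q^2;q^2)_n/(-bq^2;q^2)_n = O(1)$ (the ratio is bounded since $|bq^{2}|<1$ eventually and $(q^2;q^2)_n$ is bounded) shows the series is dominated by $\sum |q|^{2n}<\infty$; similarly every series on the right-hand side converges in that range — the middle sum has general term $O(|b|^m |q|^{m^2+2m})$, convergent for all $b$, and the tail sum has general term $O(|b|^m|q|^{2m})$, convergent for $|b|<|q|^{-2}$. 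Since both sides are analytic functions of $b$ on the disk $|b|<|q|^{-2}$ and agree on the smaller region where \eqref{4parameter} was originally valid, they agree on the whole disk by the identity theorem. The main obstacle I anticipate is bookkeeping in the $A\to0$ and $a=q^{-1}$ specializations: tracking the powers of $q$ produced by $\lim_{A\to0}(A^{-1};q)_mA^m=(-1)^mq^{\binom m2}$ and by $(-B/a;q)_{m+1}$, $(Abq/a;q)_{m+1}$ after the $q\mapsto q^2$ rescaling, and confirming that the $(1+b)$-sum genuinely telescopes down to the stated two-factor denominator $(1+q^{2m+1})(1+q^{2m+3})$ rather than something messier — this partial-fraction/telescoping verification is the step most likely to require real care, though it is ultimately routine.
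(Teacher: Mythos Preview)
Your proposal is correct and follows essentially the same route as the paper: specialize \eqref{4parameter} via $q\mapsto q^2$, $B=q^2$, $a=q^{-1}$, $A\to0$, then rewrite the $(1+b)$-sum so that it converges on the larger disk $|b|<|q|^{-2}$ and invoke the identity theorem. One clarification on the step you flag as the obstacle: after the specialization the $(1+b)$-sum collapses immediately to $(1+b)(1+q)\sum_{m\ge0}\frac{(-b)^m}{1+q^{2m+3}}$ (since $(Abq^2/a;q^2)_{m+1}\to1$ and $(-q;q^2)_{m+1}/(-q^3;q^2)_{m+1}=(1+q)/(1+q^{2m+3})$), so the two-factor denominator does not come from Pochhammer cancellation but from splitting $(1+b)(-b)^m=(-b)^m-(-b)^{m+1}$, shifting one copy, and recombining adjacent terms---exactly the ``partial-fraction'' manoeuvre you anticipated, and indeed routine.
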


\begin{proof}
We first replace $q$ by $q^2$ and then set $B=q^2$, $a=1/q$ and let $A\to0$ in \eqref{4parameter} and use the fact
\begin{align*}
\lim_{A\to0}(A^{-1};q^2)_mA^m=(-1)^mq^{m(m-1)},
\end{align*}
to deduce that
\begin{align}\label{before ac}
\sum_{n=0}^\infty\frac{(q^2;q^2)_nq^{2n}}{(-q;q^2)_n(-bq^{2};q^2)_n}&=-\frac{q(q^2;q^2)_\infty}{(-q;q^2)_\infty(-bq^2;q^2)_\infty}\sum_{m=0}^\infty\frac{(-1)^mb^mq^{m^2+2m}}{(-q^3;q^2)_{m+1}}\nonumber\\
&\quad+(1+b)\sum_{m=0}^\infty \frac{(-q;q^2)_{m+1}}{(-q^3;q^2)_{m+1}}(-b)^m\nonumber\\
&=-\frac{q(q^2;q^2)_\infty}{(-q;q^2)_\infty(-bq^2;q^2)_\infty}\sum_{m=0}^\infty\frac{(-1)^mb^mq^{m^2+2m}}{(-q^3;q^2)_{m+1}}\nonumber\\
&\quad+(1+b)(1+q)\sum_{m=0}^\infty\frac{(-b)^m}{1+q^{2m+3}}.
\end{align}
We wish to have a range of $b$ that allows us to take $b=1$  in the above equation. However, this is not directly permissible because the second series on the right-hand side becomes divergent in this case. To address this, we require analytic continuation of the above result. To that end, we rewrite the second series as
\begin{align*}
(1+b)\sum_{m=0}^\infty\frac{(-b)^m}{1+q^{2m+3}}&=\sum_{m=0}^\infty\frac{(-1)^mb^m}{1+q^{2m+3}}+\sum_{m=0}^\infty\frac{(-1)^mb^{m+1}}{1+q^{2m+3}}\\
&=\frac{1}{1+q^{3}}+\sum_{m=1}^\infty\frac{(-1)^mb^m}{1+q^{2m+3}}-\sum_{m=1}^\infty\frac{(-1)^mb^m}{1+q^{2m+1}}\\
&=\frac{1}{1+q^{3}}+\sum_{m=1}^\infty\frac{(-b)^mq^{2m+1}(1-q^2)}{(1+q^{2m+1})(1+q^{2m+3})}.
\end{align*}
Now observe that the right-hand side is an analytic function of $b$ in the region $|b|<\frac{1}{|q^2|}$. Therefore, it provides the analytic continuation of the second series on the right-hand side of \eqref{before ac}.  Since the other terms in \eqref{before ac} are also analytic in this region, this completes the proof of the result.
\end{proof}

\medskip
\section{Analogue of the spt-function identity}\label{analog of spt function}
We begin with providing the following lemma. 
\begin{lemma}\label{closed form lemma}
\begin{align*}
\sum_{n=0}^\infty\frac{(-1)^nq^{2n+1}}{1+q^{2n+1}}=\frac{1}{4}-\frac{1}{4}\frac{(q;q)_\infty^2}{(-q;q)_\infty^2}.
\end{align*}
\end{lemma}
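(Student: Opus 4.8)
The plan is to recognize the left-hand side as a classical Lambert-type series and evaluate it via a known product formula. Writing $S = \sum_{n=0}^\infty \frac{(-1)^n q^{2n+1}}{1+q^{2n+1}}$, I would first expand the geometric series $\frac{1}{1+q^{2n+1}} = \sum_{k=0}^\infty (-1)^k q^{k(2n+1)}$ and interchange summation to convert $S$ into a double sum, or alternatively manipulate $S$ into a recognizable theta quotient. A cleaner route: note that $\sum_{n=0}^\infty \frac{(-1)^nq^{2n+1}}{1+q^{2n+1}}$ is, up to reindexing over odd integers, essentially $\sum_{m\ \mathrm{odd},\ m\geq1} \frac{(-1)^{(m-1)/2} q^m}{1+q^m}$, and such sums are governed by the classical identity for $\sum \frac{q^m}{1+q^m}$-type series in terms of $\frac{(q;q)_\infty^2}{(-q;q)_\infty^2}$.

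The concrete approach I would take is to start from the well-known expansion
\begin{align*}
\frac{(q;q)_\infty^2}{(-q;q)_\infty^2}=1+4\sum_{n=1}^\infty\frac{(-1)^nq^n}{1+q^n},
\end{align*}
which is a standard consequence of Jacobi's triple product / the Lambert series for a weight-one theta function (indeed the generating function for $r_2(n)$-type coefficients, or equivalently $\varphi(-q)^2/\varphi(q)^2$ in Ramanujan's notation). Splitting the sum on the right according to the parity of $n$ gives
\begin{align*}
\sum_{n=1}^\infty\frac{(-1)^nq^n}{1+q^n}=\sum_{n=1}^\infty\frac{q^{2n}}{1+q^{2n}}-\sum_{n=0}^\infty\frac{q^{2n+1}}{1+q^{2n+1}}.
\end{align*}
The odd-indexed part here is not yet $S$ because it lacks the alternating sign $(-1)^n$ on the odd terms; so the real work is to relate $S$ to these pieces. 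I would handle this by also invoking the companion identity obtained by replacing $q$ with $-q$ (or by using the Lambert series for $\sum \frac{q^{2n+1}}{1-q^{4n+2}}$), since replacing $q\mapsto -q$ toggles exactly the signs on the odd-power terms and fixes the even ones, allowing $S$ to be isolated by taking an appropriate linear combination of the two versions.

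The main obstacle I anticipate is bookkeeping the sign patterns correctly when splitting by parity and when substituting $q \mapsto -q$: one must be careful that $\frac{(q;q)_\infty^2}{(-q;q)_\infty^2}$ is invariant or transforms predictably under $q\mapsto -q$ (in fact $(-q;-q)_\infty$ and $(q;-q)_\infty$ need to be rewritten in terms of $(q^2;q^2)_\infty$, $(q;q)_\infty$, $(-q;q)_\infty$), so that the combination genuinely collapses to $\frac14 - \frac14\frac{(q;q)_\infty^2}{(-q;q)_\infty^2}$ and not some other theta quotient. An alternative, perhaps safer, route that avoids $q\mapsto-q$ altogether is to directly expand $S$ as a double series $\sum_{n\geq0}\sum_{k\geq0}(-1)^{n+k}q^{(k+1)(2n+1)}$, reorganize by the value of the odd divisor, and match the result against the $q$-expansion of $\frac14 - \frac14\frac{(q;q)_\infty^2}{(-q;q)_\infty^2}$ using the known fact that $\frac{(q;q)_\infty^2}{(-q;q)_\infty^2}=\sum_{n\in\mathbb{Z}}(-1)^nq^{n^2}$-type evaluations; I would verify the identity to several orders in $q$ as a sanity check before committing to the closed-form manipulation.
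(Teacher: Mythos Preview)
Your central ``well-known expansion'' is misquoted: the identity coming from Jacobi's two-squares theorem (with $q\mapsto -q$) is
\[
\frac{(q;q)_\infty^2}{(-q;q)_\infty^2}=\varphi(-q)^2=1+4\sum_{n=1}^\infty\frac{(-1)^nq^n}{1+q^{2n}},
\]
with $1+q^{2n}$ in the denominator, not $1+q^n$. (A quick check: your version gives $1-4q+8q^2-\cdots$, whereas $\varphi(-q)^2=1-4q+4q^2+\cdots$.) With the wrong identity the parity split you wrote down leads nowhere; with the correct one, what you actually need is the Lambert-series rearrangement
\[
\sum_{n\ge0}\frac{(-1)^nq^{2n+1}}{1+q^{2n+1}}=\sum_{n\ge1}\frac{(-1)^{n+1}q^n}{1+q^{2n}},
\]
which is not obtained by splitting by parity but by the divisor trick you mention as your ``alternative'': expand both sides as double series over $N=d\cdot e$ with $d$ odd and observe the sign $(-1)^{(d-1)/2+e-1}$ is the same on both sides. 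That step is the real content, and your main route (parity split plus $q\mapsto -q$) does not supply it.

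So your alternative double-series approach, combined with the corrected Lambert identity for $\varphi(-q)^2$, does give a valid and quite elementary proof. This is genuinely different from the paper's argument, which proceeds via basic-hypergeometric machinery: a ${}_2\phi_1$ transformation is first used to convert $\sum_{n\ge0}\frac{(-1)^nq^{2n}}{1+q^{2n+1}}$ into $\sum_{n\ge0}\frac{(-1)^nq^{n}}{1+q^{2n+2}}$, and then Ramanujan's ${}_1\psi_1$ summation is applied to the bilateralized version of the latter to produce the product $\frac{(q;q)_\infty^2}{(-q;q)_\infty^2}$, after which splitting the bilateral sum into its two tails yields the claim. Your (corrected) approach trades this $q$-hypergeometric toolkit for the classical two-squares Lambert series; it is shorter and more self-contained, while the paper's route stays within the same circle of ideas used elsewhere in the article.
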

\begin{proof}
Replacing $q$ by $q^2$ and then setting $a=-q,\ b=q^2,\ c=-q^3$ and $z=-q^2$ in the following transformation \cite[p.~359, (III.2)]{gasper}
\begin{align*}
{}_2\phi_1(a,b;c;q,z)=\frac{(c/b;q)_\infty(bz;q)_\infty}{(c;q)_\infty(z;q)_\infty}{}_2\phi_1(abz/c,b;bz;q,c/b),
\end{align*}
we deduce that
\begin{align}
(1+q)\sum_{n=0}^\infty\frac{(-1)^nq^{2n}}{1+q^{2n+1}}&=\frac{(-q;q^2)_\infty(-q^4;q^2)_\infty}{(-q^3;q^2)_\infty(-q^2;q^2)_\infty}\sum_{n=0}^\infty\frac{(-q^2;q^2)_n}{(-q^4;q^2)_n}(-q)^n.\nonumber
\end{align}
This simplifies to
\begin{align}\label{2phi12}
\sum_{n=0}^\infty\frac{(-1)^nq^{2n}}{1+q^{2n+1}}&=\sum_{n=0}^\infty\frac{(-1)^nq^{n}}{1+q^{2n+2}}.
\end{align}
Now we first replace $q$ by $q^2$ and then let $a=-q^2,\ b=-q^4$ and $z=-q$ in Ramanujan's ${}_1\psi_1$ summation formula \cite[p.~239, (II 29)]{gasper}
\begin{align}\label{ram1psi1}
\sum_{n=-\infty}^\infty\frac{(a;q)_n}{(b;q)_n}z^n=\frac{(az,q)_\infty(q/(az);q)_\infty(q;q)_\infty(b/a;q)_\infty}{(z,q)_\infty(b/(az);q)_\infty(b;q)_\infty(q/a;q)_\infty},
\end{align}
so as to obtain
\begin{align}\label{1psi1}
\sum_{n=-\infty}^\infty\frac{(-q^2;q^2)_n}{(-q^4;q^2)_n}(-q)^n\nonumber&=\frac{(q^3;q^2)_\infty(q^{-1};q^2)_\infty(q^2;q^2)_\infty^2}{(-q;q^2)_\infty^2(-q^4;q^2)_\infty(-1;q^2)_\infty}\nonumber\\
&=-\frac{1+q^2}{2q}\frac{(q;q^2)_\infty^2(q^2;q^2)_\infty^2}{(-q;q^2)_\infty^2(-q^2;q^2)_\infty^2}\nonumber\\
&=-\frac{1+q^2}{2q}\frac{(q;q)_\infty^2}{(-q;q)_\infty^2}.
\end{align}
Note that the series on the left hand side can be rewritten as
\begin{align}\label{1psi2}
\sum_{n=-\infty}^\infty\frac{(-q^2;q^2)_n}{(-q^4;q^2)_n}(-q)^n&=\sum_{n=0}^\infty\frac{(-q^2;q^2)_n}{(-q^4;q^2)_n}(-q)^n+\sum_{n=1}^\infty\frac{(-q^2;q^2)_{-n}}{(-q^4;q^2)_{-n}}(-q)^n\nonumber\\
&=(1+q^2)\sum_{n=0}^\infty\frac{(-1)^nq^{n}}{1+q^{2n+2}}+\frac{(1+q^2)}{q^2}\sum_{n=1}^\infty\frac{(-1)^nq^{n}}{1+q^{2n-2}}\nonumber\\
&=(1+q^2)\sum_{n=0}^\infty\frac{(-1)^nq^{n}}{1+q^{2n+2}}-\frac{1+q^2}{2q}+\frac{(1+q^2)}{q^2}\sum_{n=2}^\infty\frac{(-1)^nq^{n}}{1+q^{2n-2}}\nonumber\\
&=2(1+q^2)\sum_{n=0}^\infty\frac{(-1)^nq^{n}}{1+q^{2n+2}}-\frac{1+q^2}{2q},
\end{align}
Hence, \eqref{1psi1} and \eqref{1psi2} imply
\begin{align}\label{final1729}
\sum_{n=0}^\infty\frac{(-1)^nq^{n}}{1+q^{2n+2}}=\frac{1}{4q}-\frac{1}{4q}\frac{(q;q)_\infty^2}{(-q;q)_\infty^2}.
\end{align}
Lemma now follows from \eqref{2phi12} and \eqref{final1729}.
\end{proof}

\begin{proof}[\textbf{Theorem \textup{\ref{spt theorem}}}][]
Letting $a=1$, $r_1=z$, $r_2=1/z$ and taking $b\to\infty$ in \eqref{andrewsstacks} and using the definition of ${}_{10}\phi_9$ and the fact that $$\lim_{b\to\infty}\frac{(b)_n}{b^n}=(-1)^nq^{n(n-1)/2},$$ we deduce that
\begin{align}\label{1step}
&\lim_{N\to\infty}\sum_{n=0}^\infty\frac{(-q)_n(z)_n(z^{-1})_n(-z)_n(-z^{-1})_n(q^{-N})_n(-q^{-N})_n}{(-1)_n(q/z)_n(-q/z)_n(qz)_n(-qz)_n(q^{N+1})_n(-q^{N+1})_n}q^{2Nn+3n+\frac{n(n-1)}{2}}\nonumber\\
&=\frac{(q^2;q^2)_\infty^2}{(q^2/z^2;q^2)_\infty(q^2z^2;q^2)_\infty}\sum_{n=0}^\infty\frac{(z^2;q^2)_n(z^{-2};q^2)_nq^{2n}}{(q^2;q^2)_n(-q;q)_{2n}}.
\end{align}
By doing simple manipulations, one can see that
\begin{align}
\lim_{N\to\infty}\frac{(q^{-N})_n(-q^{-N})_n}{(q^{N+1})_n(-q^{N+1})_n}q^{2nN}=(-1)^nq^{n(n-1)}.\nonumber
\end{align}
Substituting it in \eqref{1step} and simplifying, we are led to
\begin{align}
&\frac{(q^2;q^2)_\infty^2}{(q^2/z^2;q^2)_\infty(q^2z^2;q^2)_\infty}\sum_{n=0}^\infty\frac{(z^2;q^2)_n(z^{-2};q^2)_nq^{2n}}{(q^2;q^2)_n(-q;q)_{2n}}\nonumber\\
&=\sum_{n=0}^\infty\frac{(-1)^n(1+q^n)(1-z^2)(1-z^{-2})}{(1-z^2q^{2n})(1-q^{2n}z^{-2})}q^{3n+\frac{3}{2}n(n-1)}.\nonumber
\end{align}
Replacing $z^2$ by $z$ and rearranging terms, we obtain
\begin{align}
&\sum_{n=0}^\infty\frac{(z;q^2)_n(z^{-1};q^2)_nq^{2n}}{(q^2;q^2)_n(-q;q)_{2n}}\nonumber\\
&=\frac{1}{(q^2;q^2)_\infty^2}\left((z^{-1}q^2;q^2)_\infty(zq^2;q^2)_\infty+\sum_{n=1}^\infty\frac{(-1)^n(1+q^n)(z^{-1};q^2)_\infty(z;q^2)_\infty}{(1-zq^{2n})(1-q^{2n}z^{-1})}q^{\frac{3}{2}n(n+1)}\right).\nonumber
\end{align}
We now differentiate both sides twice with respect to $z$, where we use \cite[Equation (2.1)]{andrewsspt}
\begin{align*}
-\frac{1}{2}\frac{d^2}{dz^2}\left((1-z)(1-z^{-1})f(z)\right)=f(1),
\end{align*}
first with $f(z)=(zq^2,q^2)_{n-1}(z^{-1}q^2,q^2)_{n-1}^2$, resulting in
\begin{align*}
-\frac{1}{2}\frac{d^2}{dz^2}\left((z^{-1};q^2)_n(z;q^2)_n\right)\Big|_{z=1}=(q^2;q^2)_{n-1}^2,
\end{align*}
and then with $$f(z)=\frac{(zq^2,q^2)_{\infty}(z^{-1}q^2,q^2)_{\infty}^2}{(1-zq^{2n})(1-q^{2n}z^{-1})},$$ resulting in
\begin{align*}
-\frac{1}{2}\frac{d^2}{dz^2}\left(\frac{(z^{-1};q^2)_\infty(z;q^2)_\infty}{(1-zq^{2n})(1-q^{2n}z^{-1})}\right)\Big|_{z=1}=\frac{(q^2;q^2)_{\infty}^2}{(1-q^{2n})^2},
\end{align*}
and \cite[Equation (2.4)]{andrewsspt}
\begin{align*}
-\frac{1}{2}\frac{d^2}{dz^2}\left((z^{-1}q^2;q^2)_\infty(zq^2;q^2)_\infty\right)\Big|_{z=1}=(q^2;q^2)_\infty^2\sum_{n=1}^\infty\frac{nq^{2n}}{1-q^{2n}},
\end{align*}
so as to obtain
\begin{align}\label{almost spt id}
\sum_{n=0}^\infty\frac{(q^2;q^2)_{n-1}^2q^{2n}}{(q^2;q^2)_n(-q;q)_{2n}}=\sum_{n=1}^\infty\frac{nq^{2n}}{1-q^{2n}}+\sum_{n=1}^\infty\frac{(-1)^n(1+q^n)q^{\frac{3}{2}n(n+1)}}{(1-q^{2n})^2}.
\end{align}
Note that the left-hand side of the above expression can be rewritten as
\begin{align*}
\sum_{n=0}^\infty\frac{(q;q)_{n}q^{2n}}{(1-q^{2n})^2(-q^{n+1};q)_{n}}.
\end{align*}
Now using the above fact in \eqref{almost spt id} then dividing both sides by $(q;q)_\infty$, we complete the proof of the theorem.
\end{proof}

\section{A new representation of the Mock Theta function $f_3(q)$}\label{new representation for f}

\begin{proof}[\textbf{Theorem \textup{\ref{new representation}}}][]
Note that
\begin{align}\label{transf}
\sum_{n=1}^\infty\frac{q^{2n}}{(q^{2n};q^2)_{n+1}(q^{2n+1};q)_\infty}
&=\sum_{n=1}^\infty\frac{q^{2n}}{(1-q^{2n})(-q^{n+1};q)_n(q^{n+1};q)_\infty}\nonumber\\
&=\sum_{n=1}^\infty\frac{q^{2n}}{(-q^{n};q)_{n+1}(q^{n};q)_\infty}\nonumber\\
&=q^2\sum_{n=0}^\infty\frac{q^{2n}}{(-q^{n+1};q)_{n+2}(q^{n+1};q)_\infty}\nonumber\\
&=\frac{q^2}{(q;q)_\infty}\sum_{n=0}^\infty\frac{(q;q)_n q^{2n}}{(-q^{n+1};q)_{n+2}}\nonumber\\
&=\frac{q^2}{(q;q)_\infty}\sum_{n=0}^\infty\frac{(q;q)_n(-q;q)_n q^{2n}}{(-q;q)_{2n+2}}\nonumber\\
&=\frac{q^2}{(q;q)_\infty(1+q)(1+q^2)}\sum_{n=0}^\infty\frac{(q^2;q^2)_n q^{2n}}{(-q^3;q^2)_{n}(-q^4;q^2)_{n}}.
\end{align}
We first replace $q$ by $q^2$ in \eqref{4parameter} and then  let $a=q,\ b=q^2$, $B=q^2$ and $A\to0$ to obtain
\begin{align}\label{4para1}
\sum_{n=0}^\infty\frac{(q^2;q^2)_n q^{2n}}{(-q^3;q^2)_{n}(-q^4;q^2)_{n}}&=-\frac{1}{q}\frac{(q^2;q^2)_\infty}{(-q^4;q^2)_\infty(-q^3;q^2)_\infty}\sum_{m=0}^\infty\frac{(-1)^m q^{m(m-1)}q^{3m}}{(-q;q^2)_{m+1}}\nonumber\\
&\qquad+(1+q^2)\sum_{m=0}^\infty\frac{(-q^{-1};q^2)_{m+1}(-q^2)^m}{(-q;q^2)_{m+1}}\nonumber\\
&=-\frac{1}{q^2}\frac{(q^2;q^2)_\infty}{(-q^4;q^2)_\infty(-q^3;q^2)_\infty}\sum_{m=1}^\infty\frac{(-1)^{m-1} q^{m^2}}{(-q;q^2)_{m}}\nonumber\\
&\qquad+\frac{(1+q)(1+q^2)}{q}\sum_{m=0}^\infty\frac{(-1)^mq^{2m}}{1+q^{2m+1}}.
\end{align}
Equations \eqref{transf} and \eqref{4para1} together yield
\begin{align}\label{2sums}
&\sum_{n=1}^\infty\frac{q^{2n}}{(q^{2n};q^2)_{n+1}(q^{2n+1};q)_\infty}\nonumber\\
&=\frac{q^2}{(q;q)_\infty(1+q)(1+q^2)}\left\{-\frac{1}{q^2}\frac{(q^2;q^2)_\infty}{(-q^4;q^2)_\infty(-q^3;q^2)_\infty}\sum_{m=1}^\infty\frac{(-1)^{m-1} q^{m^2}}{(-q;q^2)_{m}}\right.\nonumber\\
&\left.\qquad+\frac{(1+q)(1+q^2)}{q}\sum_{m=0}^\infty\frac{(-1)^mq^{2m}}{1+q^{2m+1}}\right\}\nonumber\\
&=-\sum_{m=1}^\infty\frac{(-1)^{m-1} q^{m^2}}{(-q;q^2)_{m}}+\frac{1}{(q;q)_\infty}\sum_{m=0}^\infty\frac{(-1)^mq^{2m+1}}{1+q^{2m+1}}
\end{align}
From \cite[Chapter 2, Entry 2.3.9]{rlnb-V}, we have
\begin{align}\label{239}
\sum_{m=0}^\infty\frac{(-1)^{m} q^{m^2}}{(-aq^2;q^2)_{m}}=(1+a)\sum_{m=1}^\infty\frac{(-1)^{m-1} q^{m^2}}{(-aq;q^2)_{m}}+\frac{\varphi(-q)}{(-aq;q)_\infty},
\end{align}
where $$\varphi(-q):=\sum_{n=-\infty}^\infty(-1)^nq^{n^2}=\frac{(q;q)_\infty}{(-q;q)_\infty}.$$
Letting $a=1$ in the above equation and simplifying, we see that
\begin{align}\label{phi312}
\sum_{m=1}^\infty\frac{(-1)^{m-1} q^{m^2}}{(-q;q^2)_{m}}=\frac{1}{2}\phi_3(-q)-\frac{1}{2}\frac{(q;q)_\infty}{(-q;q)_\infty^2},
\end{align}
where $\phi_3(q)$ is another mock theta function of the third order
\begin{align}
\phi_3(q):=\sum_{n=0}^\infty\frac{q^{n^2}}{(-q^2;q^2)_n}.\nonumber
\end{align}
We need the following relation between mock theta functions $f_3(q)$ and $\phi_3(q)$ \cite[Chapter 2, Entry 2.3.1]{rlnb-V}
\begin{align}\label{2.3.1}
\phi_3(-q)=\frac{1}{2}f_3(q)+\frac{1}{2}\frac{(q;q)_\infty}{(-q;q)_\infty^2}
\end{align}
Substituting value from \eqref{2.3.1} in \eqref{phi312}, we deduce that
\begin{align}\label{suminf}
\sum_{m=1}^\infty\frac{(-1)^{m-1} q^{m^2}}{(-q;q^2)_{m}}=\frac{1}{4}f_3(q)-\frac{1}{4}\frac{(q;q)_\infty}{(-q;q)_\infty^2}.
\end{align}
Equations \eqref{2sums} and \eqref{suminf} imply that
\begin{align*}
&\sum_{n=1}^\infty\frac{q^{2n}}{(q^{2n};q^2)_{n+1}(q^{2n+1};q)_\infty}=-\frac{1}{4}f_3(q)+\frac{1}{4}\frac{(q;q)_\infty}{(-q;q)_\infty^2}+\frac{1}{(q;q)_\infty}\sum_{m=0}^\infty\frac{(-1)^mq^{2m+1}}{1+q^{2m+1}}.
\end{align*}
Now the claimed result follows upon invoking Lemma \ref{closed form lemma} in the above equation.
\end{proof}

\section{Partition ranks and two-color partitions}\label{partition ranks and two-color}
\begin{proof}[\textbf{Theorem \textup{\ref{G(n)=N(n)}}}][]
Note that 
\begin{align}\label{g1}
\sum_{n=1}^\infty\frac{q^{2n}}{(1-q^{2n})(-q^{n+1};q)_n(q^{n+1};q)_\infty}&=\sum_{n=1}^\infty\frac{q^{2n}}{(1-q^{2n})(-q^{n+1};q)_n(q^{n+1};q)_n(q^{2n+1};q)_\infty}\nonumber\\
&=\sum_{n=1}^\infty\frac{q^{2n}}{(1-q^{2n})(q^{2n+2};q^2)_n(q^{2n+1};q)_\infty}\nonumber\\
&=\sum_{n=1}^\infty\frac{q^{2n}}{(q^{2n+2};q^2)_n(q^{2n};q)_\infty}\nonumber\\
&=\sum_{n=1}^\infty G(n)q^n,
\end{align}
where the last step follows easily after observing that the expression in the penultimate step is the generating function of $G(n)$.

We have
\begin{align*}
\sum_{n=1}^\infty p(n)q^n=\frac{1}{(q;q)_\infty},
\end{align*}
where $p(n)$ denotes the number of partitions of $n$. We know that 
\begin{align*}
f_3(q)=1+\sum_{n=1}^\infty \left(N_e(n)-N_o(n)\right)q^n,
\end{align*}
where $N_e(n)$ and $N_o(n)$ denotes the number of partitions of $n$ with even rank and odd rank, respectively. Moreover, we have the identities 
$$p(n)=N_e(n)+N_o(n)\qquad \textup{and}\qquad  N_o(n)=2N_o^+(n).$$ Therefore, employing the above facts along with Theorem \ref{new representation}, we conclude that
\begin{align}\label{g2}
\sum_{n=1}^\infty N_o^+(n)q^n&=\sum_{n=1}^\infty\frac{q^{2n}}{(q^{2n};q^2)_{n+1}(q^{2n+1};q)_\infty}\nonumber\\
&=\sum_{n=1}^\infty\frac{q^{2n}}{(1-q^{2n})(-q^{n+1};q)_n(q^{n+1};q)_\infty}.
\end{align}
Equations \eqref{g1} and \eqref{g2} together now prove our result.
\end{proof}

\begin{proof}[\textbf{Corollary \textup{\ref{Ne(n)}}}][]
Using \eqref{g2} and Theorem \ref{new representation}, it is easy to prove \eqref{even rank generating function}.

Equation \eqref{fine numbers result} follows from the following result \cite[p.~56, Equation (26.27)]{fine}
\begin{align}
f_3(q)=\frac{1}{(q;q)_\infty}\left\{1+4\sum_{n=1}^\infty J(n)q^n\right\},\nonumber
\end{align}
and after invoking Theorem \ref{new representation}.

\end{proof}

\section{Two-color partitions with odd smallest part}\label{odd smallest part}
Let $G'(n)$ denotes the number of two-color partitions with color red and blue, where the smallest part is odd (say $(2m+1)$)  and all the red parts are even and lie in the interval $(2m,4m]$. Then, it is clear that
\begin{align}\label{g'}
\sum_{n=0}^\infty G'(n)q^n=\sum_{n=0}^\infty\frac{q^{2n+1}}{(q^{2n+1};q)_\infty(q^{2n+2};q^2)_n}.
\end{align}

Let $\phi_3(q)$ is another mock theta function of the third order
\begin{align}\label{phi3}
\phi_3(q):=\sum_{n=0}^\infty\frac{q^{n^2}}{(-q^2;q^2)_n}.
\end{align}

\begin{theorem}\label{with odd smallest part}
Let $G'(n)$ and  $\phi_3(n)$ be defined as in \eqref{g'} and \eqref{phi3}, respectively. Then, we have
\begin{align*}
\sum_{n=0}^\infty G'(n)q^n&=q^2-q(1+q)\phi_3(-q)+\frac{q(3-q)}{2(q;q)_\infty}+\frac{q(1+q)(q^2;q^2)_\infty}{2(-q;q)_\infty^3}.
\end{align*}
\end{theorem}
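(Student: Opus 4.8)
The plan is to mimic the manipulations already carried out for $G(n)$ in \eqref{g1} and for $f_3(q)$ in \eqref{transf}, but now starting from the generating function \eqref{g'} with an odd smallest part. First I would rewrite the right-hand side of \eqref{g'} by pulling the factor $q$ out front and combining the Pochhammer symbols: since $(q^{2n+1};q)_\infty(q^{2n+2};q^2)_n = (q^{2n+1};q)_\infty\,(q^{2n+2};q^2)_n$ telescopes, after the substitutions $(q^{n+1};q)_n(q^{2n+1};q)_\infty = (q^{n+1};q)_\infty$ and $(-q^{n+1};q)_n(q^{n+1};q)_n=(q^{2n+2};q^2)_n$ used in \eqref{g1}, one lands on an expression of the shape
\begin{align*}
\sum_{n=0}^\infty G'(n)q^n = \frac{q}{(q;q)_\infty}\sum_{n=0}^\infty\frac{(q;q)_n\,q^{2n}}{(-q^{n+1};q)_{n+1}},
\end{align*}
and then, exactly as in the last two lines of \eqref{transf}, convert $(q;q)_n(-q;q)_n/(-q;q)_{2n+1}$ into a base-$q^2$ quotient, arriving at a multiple of $\sum_{n\ge0}(q^2;q^2)_n q^{2n}/\bigl((-q;q^2)_n(-bq^2;q^2)_n\bigr)$ for an appropriate specialization. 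The precise bookkeeping of which single Pochhammer factor survives in the denominator (here $(-q^3;q^2)_{n}$ versus $(-q^4;q^2)_n$, compared with the two that appear in \eqref{transf}) is what distinguishes the odd-smallest-part case from the even one, and this is where I would be most careful.

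Once the sum is in the form $\sum_{n\ge0}(q^2;q^2)_n q^{2n}/\bigl((-q;q^2)_n(-bq^2;q^2)_n\bigr)$, I would apply Lemma \ref{analytic continuation} with the value of $b$ dictated by the above reduction. That lemma produces three pieces: a rational constant $(1+q)/(1+q^3)$, a theta-quotient times $\sum_m (-1)^m b^m q^{m^2+2m}/(-q^3;q^2)_{m+1}$, and the tail $\sum_{m\ge1}(-b)^m q^{2m+1}/\bigl((1+q^{2m+1})(1+q^{2m+3})\bigr)$. For the middle piece I would re-index (shift $m\mapsto m-1$) to recognize $\sum_{m\ge1}(-1)^{m-1}q^{m^2}/(-q;q^2)_m$, which by \eqref{phi312} and \eqref{suminf} equals $\tfrac14 f_3(q)-\tfrac14(q;q)_\infty/(-q;q)_\infty^2$, or equivalently (via \eqref{2.3.1}) is directly expressible through $\phi_3(-q)$; since the target is stated in terms of $\phi_3(-q)$, I would use \eqref{phi312} to keep $\phi_3(-q)$ rather than $f_3$. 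For the tail piece, the telescoping identity already exhibited inside the proof of Lemma \ref{analytic continuation} shows $\sum_{m\ge1}(-b)^m q^{2m+1}(1-q^2)/\bigl((1+q^{2m+1})(1+q^{2m+3})\bigr)$ collapses back to an elementary combination of $\sum_{m\ge0}(-1)^m q^{2m}/(1+q^{2m+1})$-type sums, and Lemma \ref{closed form lemma} evaluates the relevant one as $\tfrac14-\tfrac14(q;q)_\infty^2/(-q;q)_\infty^2$.

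Finally I would collect all constants. After multiplying through by the prefactor $q/\bigl((q;q)_\infty(1+q)(1+q^2)\bigr)$-type scalar that came out of the Pochhammer reduction, the theta quotient in front of the $\phi_3$ term should simplify — using $(q^2;q^2)_\infty=(q;q)_\infty(-q;q)_\infty$ and $\varphi(-q)=(q;q)_\infty/(-q;q)_\infty$ — to exactly the coefficients $-q(1+q)$, $q(3-q)/(2(q;q)_\infty)$, $q(1+q)(q^2;q^2)_\infty/(2(-q;q)_\infty^3)$, and a stray polynomial $q^2$; matching these is a finite, if slightly fiddly, algebraic check. The main obstacle I anticipate is not any single deep step but the accurate tracking of the elementary prefactors through the chain \eqref{g'} $\to$ base-$q^2$ form $\to$ Lemma \ref{analytic continuation} $\to$ re-indexed $\phi_3$-sum, since an error of a factor of $q$ or $(1+q)$ anywhere will spoil the final constant-matching; I would double-check it by verifying the identity to a few orders in $q$ against the explicit small values of $G'(n)$.
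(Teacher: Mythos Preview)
Your overall architecture---reduce \eqref{g'} to $\dfrac{q}{(q;q)_\infty}\sum_{n\ge0}\dfrac{(q^2;q^2)_nq^{2n}}{(-q;q^2)_n(-q^2;q^2)_n}$ (so $b=1$ in Lemma~\ref{analytic continuation}), apply that lemma, and simplify the three pieces---is exactly the paper's route. (A small slip: the intermediate form is $\sum_{n\ge0}(q;q)_nq^{2n}/(-q^{n+1};q)_{n}$, not $(-q^{n+1};q)_{n+1}$.)

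The substantive gap is in the middle piece. After the shift $m\mapsto m-1$ the sum that appears is
\[
\sum_{m\ge1}\frac{(-1)^{m}q^{m^2}}{(-q^{3};q^2)_m},
\]
with $(-q^{3};q^2)_m$ in the denominator, \emph{not} $(-q;q^2)_m$. Thus \eqref{phi312} and \eqref{suminf}, which are the $a=1$ specialization of \eqref{239}, do not apply. The paper instead invokes \eqref{239} with $a=q^{2}$, obtaining $\sum_{m\ge0}(-1)^mq^{m^2}/(-q^2;q^2)_{m+1}$, and then needs a further telescoping step (equation \eqref{phi3m}: split $\tfrac{1}{1+q^{2m+2}}=1-\tfrac{q^{2m+2}}{1+q^{2m+2}}$ and re-index) to isolate $(1+q)\phi_3(-q)$ and the stray $q^2$. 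Without this extra step the target's $-q(1+q)\phi_3(-q)$ and the polynomial $q^2$ never separate, so your constant-matching at the end cannot succeed as written.

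For the tail, your idea can be rescued---after partial fractions write $\tfrac{1}{1+q^{2m+k}}=1-\tfrac{q^{2m+k}}{1+q^{2m+k}}$ so the single-denominator sums converge, then Lemma~\ref{closed form lemma} applies---but note that a naive reversal of the telescoping in Lemma~\ref{analytic continuation} yields divergent pieces at $b=1$. The paper sidesteps this by evaluating the double-denominator sum directly via a second application of Ramanujan's ${}_1\psi_1$ summation.
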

\begin{proof}
Note that
\begin{align*}
\sum_{n=0}^\infty G'(n)q^n&=\frac{q}{(q;q)_\infty}\sum_{n=0}^\infty\frac{(q;q)_{2n}q^{2n}}{(q^{n+1};q)_n(-q^{n+1};q)_n}\\
&=\frac{q}{(q;q)_\infty}\sum_{n=0}^\infty\frac{(q;q)_{2n}(q;q)_n(-q;q)_nq^{2n}}{(q;q)_n(q^{n+1};q)_n(-q;q)_n(-q^{n+1};q)_n}\\
&=\frac{q}{(q;q)_\infty}\sum_{n=0}^\infty\frac{(q^2;q^2)_nq^{2n}}{(-q;q)_{2n}}\\
&=\frac{q}{(q;q)_\infty}\sum_{n=0}^\infty\frac{(q^2;q^2)_nq^{2n}}{(-q;q^2)_{n}(-q^2;q^2)_{n}}.
\end{align*}
Employing Lemma \ref{analytic continuation} with letting $b=1$ in the above equation, we arrive at
\begin{align}\label{seriesf}
\sum_{n=0}^\infty G'(n)q^n&=\frac{q}{(q;q)_\infty}\left\{-\frac{q(q^2;q^2)_\infty}{(-q;q^2)_\infty(-q^2;q^2)_\infty}\sum_{m=0}^\infty\frac{(-1)^mq^{m^2+2m}}{(-q^3;q^2)_{m+1}}+\frac{1+q}{1+q^3}\right.\nonumber\\
&\qquad\left.+(1+q)(1-q^2)\sum_{m=1}^\infty\frac{(-1)^mq^{2m+1}}{(1+q^{2m+1})(1+q^{2m+3})}\right\}\nonumber\\
&=q\sum_{m=1}^\infty\frac{(-1)^mq^{m^2}}{(-q^3;q^2)_{m}}+\frac{q(1+q)}{(q;q)_\infty(1+q^3)}\nonumber\\
&\qquad+\frac{q(1+q)(1-q^2)}{(q;q)_\infty}\sum_{m=1}^\infty\frac{(-1)^mq^{2m+1}}{(1+q^{2m+1})(1+q^{2m+3})},
\end{align}
where in the last step we replaced $m\to m-1$ in the first series.

We set $a=q^2$ in \eqref{239} and rearrange terms so that
\begin{align*}
\sum_{m=1}^\infty\frac{(-1)^mq^{m^2}}{(-q^3;q^2)_{m}}=\frac{(q;q)_\infty}{(-q;q)_\infty(-q^2;q)_\infty}-\sum_{m=0}^\infty\frac{(-1)^mq^{m^2}}{(-q^2;q^2)_{m+1}}.
\end{align*}

Substituting this value in \eqref{seriesf}, we obtain
\begin{align}\label{beforephi}
\sum_{n=0}^\infty G'(n)q^n&=\frac{q(q;q)_\infty}{(-q;q)_\infty(-q^2;q)_\infty}-q\sum_{m=0}^\infty\frac{(-1)^mq^{m^2}}{(-q^2;q^2)_{m+1}}+\frac{q(1+q)}{(q;q)_\infty(1+q^3)}\nonumber\\
&\qquad+\frac{q(1+q)(1-q^2)}{(q;q)_\infty}\sum_{m=1}^\infty\frac{(-1)^mq^{2m+1}}{(1+q^{2m+1})(1+q^{2m+3})}.
\end{align}
Observe that
\begin{align}\label{phi3m}
\sum_{m=0}^\infty\frac{(-1)^mq^{m^2}}{(-q^2;q^2)_{m+1}}&=\sum_{m=0}^\infty\frac{(-1)^mq^{m^2}}{(-q^2;q^2)_{m}}\left(\frac{1}{1+q^{2m+2}}-1+1\right)\nonumber\\
&=-\sum_{m=0}^\infty\frac{(-1)^mq^{m^2+2m+2}}{(-q^2;q^2)_{m+1}}+\phi_3(-q)\nonumber\\
&=q\sum_{m=1}^\infty\frac{(-1)^mq^{m^2}}{(-q^2;q^2)_{m}}+\phi_3(-q)\nonumber\\
&=q\left\{-1+\phi_3(-q)\right\}+\phi_3(-q)\nonumber\\
&=-q+(1+q)\phi_3(-q).
\end{align}
Equations \eqref{beforephi} and \eqref{phi3m} together imply
\begin{align}\label{last2}
\sum_{n=0}^\infty G'(n)q^n&=q^2-q(1+q)\phi_3(-q)+\frac{q(1+q)(q;q)_\infty}{(-q;q)_\infty^2}+\frac{q(1+q)}{(q;q)_\infty(1+q^3)}\nonumber\\
&\qquad+\frac{q(1+q)(1-q^2)}{(q;q)_\infty}\sum_{m=1}^\infty\frac{(-1)^mq^{2m+1}}{(1+q^{2m+1})(1+q^{2m+3})}.
\end{align}
We now evaluate the series involved in the above expression. Replacing $q$ by $q^2$ and letting $a=-q,\ b=-q^5$ and $z=-q^2$ in the Ramanujan's ${}_1\psi_1$ formula \eqref{ram1psi1}, we obtain
\begin{align}\label{last}
\sum_{n=-\infty}^\infty\frac{(-1)^nq^{2n}(-q;q^2)_n}{(-q^5,q^2)_n}&=\frac{(q^3;q^2)_\infty(q^{-1};q^2)_\infty(q^2;q^2)_\infty(q^4;q^2)_\infty}{(-q^2,q^2)_\infty^2(-q^5;q^2)_\infty(-q;q^2)_\infty}\nonumber\\
&=-\frac{(1+q)(1+q^3)}{q(1-q^2)}\frac{(q;q^2)_\infty^2(q^2;q^2)_\infty^2}{(-q^2,q^2)_\infty^2(-q;q^2)_\infty^2}\nonumber\\
&=-\frac{(1+q)(1+q^3)}{q(1-q^2)}\frac{(q;q)_\infty^2}{(-q,q)_\infty^2}.
\end{align}
It is easy to see that the series on the left-hand side can be written as
\begin{align*}
\sum_{n=-\infty}^\infty\frac{(-1)^nq^{2n}(-q;q^2)_n}{(-q^5,q^2)_n}=2(1+q)(1+q^3)\sum_{n=0}^\infty\frac{(-1)^nq^{2n}}{(1+q^{2n+1})(1+q^{2n+3})}-\frac{1}{q}\frac{1+q^3}{1+q}.
\end{align*} 
This implies that
\begin{align}\label{last1}
&\sum_{n=1}^\infty\frac{(-1)^nq^{2n}}{(1+q^{2n+1})(1+q^{2n+3})}\nonumber\\
&=\frac{1}{2q(1+q)^2}-\frac{1}{(1+q)(1+q^3)}+\frac{1}{2(1+q)(1+q^3)}\sum_{n=-\infty}^\infty\frac{(-1)^nq^{2n}(-q;q^2)_n}{(-q^5,q^2)_n}\nonumber\\
&=\frac{1}{2q(1+q)^2}-\frac{q}{(1+q)(1+q^3)}-\frac{1}{2q(1-q^2)}\frac{(q;q)_\infty^2}{(-q,q)_\infty^2},
\end{align}
where the last step follows from \eqref{last}.  

Now result follows from equations \eqref{last2} and \eqref{last1}.
\end{proof}

\section{Concluding Remarks}\label{concluding remarks}
One of our main results shows that the number of partitions of a natural number with positive odd rank is equal to the number of two-color partitions (red and blue), where the smallest part is even (say $2n$) and all red parts are even and lie within the interval $(2n,4n]$. We proved this result through analytic methods. Now this naturally raises a combinatorial question:

 \emph{Can one produce a bijection between these two sets of partitions}?

Another direction worth exploring is the following: It is evident that the left-hand side of the $q$-series identity in \eqref{spt theorem eqn} serves as the generating function for the smallest parts function associated with the partitions generated by $G(n)$. This new analogue of the smallest part partition function certainly merits further investigation.

\medskip

\noindent {\bf{Acknowledgements:}}\, Authors would like to thank Aa Ja Yee and Atul Dixit for helpful discussions. The first author is partially supported by the Simons Foundation Grant 633284, and the second author is partially supported by the FIG grant of IIT Roorkee. Both authors sincerely thank these institutions for their support.

\end{document}